\numberwithin{figure}{section}
\theoremstyle{plain}
\newtheorem{thm}{Theorem}[section]
\newtheorem{lem}[thm]{Lemma}
\newtheorem{cor}{Corollary}[thm]
\theoremstyle{definition}
\newtheorem{defn}{Definition}[section]
\theoremstyle{remark}
\title[On gradient $\rho$-Einstein solitons]{Diameter estimation of gradient $\rho$-Einstein solitons}
\author[A. A. Shaikh, P. Mandal and C. K. Mondal]{Absos Ali Shaikh$^{*1}$, Prosenjit Mandal$^{2}$ and Chandan Kumar Mondal $^{3\dag}$}
\address{$^{1,2,3}$Department of Mathematics,\newline University of
Burdwan, Golapbag,\newline Burdwan-713104,\newline West Bengal, India.}
\address{$^{\dag}$Department of Mathematics,\newline School of Sciences,\newline Durgapur Regional Center\newline Netaji Subhas Open University,\newline Durgapur-713214,\newline West Bengal, India.}
\email{$^1$aask2003@yahoo.co.in, aashaikh@math.buruniv.ac.in}
\email{$^2$prosenjitmandal235@gmail.com}
\email{$^3$chan.alge@gmail.com, chandanmondal@wbnsou.ac.in}
\begin{document}
\begin{abstract}
Our aim in this article is to give a lower bound of the diameter of a compact gradient $\rho$-Einstein soliton satisfying some given conditions. We have also deduced some conditions of the gradient $\rho$-Einstein soliton with bounded Ricci curvature to become non-shrinking and non-expanding. Further, we have proved that a complete non-compact gradient shrinking or expanding Schouten soliton with non-constant potential and a boundedness condition on scalar curvature must be non-parabolic. 
\end{abstract}
\noindent\footnotetext{
$*$ Corresponding author.\\
$\mathbf{2010}$\hspace{5pt}Mathematics\; Subject\; Classification: 53C20; 53C21; 53C25.\\ 
{Key words and phrases: $\rho$-Einstein solitons; Schouten Soliton; non-parabolic; Diameter estimation; concave function; Riemannian manifolds}} 
\maketitle
\section{Introduction and preliminaries}
The Ricci flow introduced by Hamilton \cite{HA82} plays a significant role in Perelman's proof of the Poincar\'e conjecture and currently it has been intensively used in the study of various geometric properties.
For study on Ricci flow, see \cite{Chow-B}.

An important aspect in the investigation of the Ricci flow is the study of Ricci solitons. A gradient Ricci soliton is an $n(\geq 2)$-dimensional Riemannian manifold $(M, g)$ with Riemannian metric $g$, satisfying
\begin{equation}\label{rs1}
 Ric + \nabla^2 f = \lambda g,
 \end{equation}
where $\nabla^2f$ stands for the Hessian of $f\in C^{\infty}(M)$, the ring of smooth functions on $M$, $Ric$ is the Ricci curvature tensor and $\lambda \in \mathbb{R}$. A Ricci soliton $(M, g)$ is called expanding if $\lambda <0$, steady if $\lambda=0$ and shrinking if $\lambda > 0$. For some results of Ricci solitons see \cite{CA2019, CA2020, AC2021}. In general, it is natural to consider geometric flows of the following type on a $n(\geq 3)$- dimensional Riemannian manifold $(M,g)$:
$$\frac{\partial g}{\partial t}=-2({\rm Ric}-\rho Rg),$$
where $R$ denotes the scalar curvature of the metric $g$ and $\rho\in\mathbb{R}\diagdown\{0\}$. The parabolic theory for these flows was developed by Catino et. al. \cite{Catino13}, which was first considered by Bourguignon \cite{Bour}. They called such a flow as Ricci-Bourguignon flows. they defined the following notion of
 $\rho$-Einstein solitons.
 
 \begin{defn}
Let $(M,g)$ be a Riemannian manifold of dimension $n(\geq 3)$, and let $\rho\in\mathbb{R}$, $\rho\neq 0$. Then $M$ is called a $\rho$-Einstein soliton if there is a smooth vector field $X$ such that

\begin{equation}\label{a2}
{\rm Ric}+\frac{1}{2}\mathcal{L}_Xg-\rho Rg=\lambda g,
\end{equation}
where $\mathcal{L}_Xg$ represents the Lie derivative of g in the direction of the vector field $X$.  
\end{defn}
If there exists a smooth function $f:M\rightarrow\mathbb{R}$ such that $X=\nabla f$ then the $\rho$-Einstein soliton is called a gradient $\rho$-Einstein soliton, denoted by $(M,g,f)$ and in this case (\ref{a2}) takes the form 
 
\begin{equation}\label{res1}
 Ric+\nabla^2 f-\rho Rg=\lambda g.
\end{equation}
The function $f$ is called a $\rho$-Einstein potential of the gradient $\rho$-Einstein soliton. As usual, a $\rho$-Einstein soliton is called steady for $\lambda=0$, shrinking for $\lambda>0$ and expanding for $\lambda<0$. After rescaling the metric $g$ we may assume that $\lambda \in \{-\frac{1}{2}, 0,\frac{1}{2}\}$. For more study on $\rho$-Einstein solitons, we refer to the interested reader \cite{CAT16,  AAP2021, Absos} and also the references therein.

For particular value of the parameter $\rho$, a $\rho$-Einstein soliton is called 
\begin{enumerate}
\item[i)] gradient Einstein soliton if $\rho=\frac{1}{2}$,
\item[ii)] gradient traceless Ricci soliton if $\rho=\frac{1}{n}$,
\item[iii)] gradient Schouten soliton if $\rho=\frac{1}{2(n-1)}$.
\end{enumerate}
Taking trace of $(\ref{res1})$ we obtain
\begin{equation}\label{res2}
 R+\Delta f-n\rho R=n\lambda.
 \end{equation}
 Thus for gradient Schouten soliton, $(\ref{res2})$ takes the form
 \begin{equation}\label{ss1}
  R+\Delta f-\frac{nR}{2(n-1)}=n\lambda.
 \end{equation}
 The diameter estimation of Ricci soliton is an abuzz topic of research. One of the first result came from the work of Myers \cite{MY41}. In particular, he showed that a complete $n$-dimensional Riemannian manifold $(M,g)$ with Ricci curvature satisfying $Ric\geq \lambda g$ for some positive constant $\lambda$ is compact with the diameter $diam(M)$ having an upper bound $\pi \sqrt{(n-1)/\lambda}$. After that many authors have investigated to find a bound for a manifold satisfying some curvature flow conditions, for example see \cite{FG10, FS13, LW14}. For the complete literature in this topic see the survey article \cite{TA19}. 
 \par The paper is organized as follows: In the section 1, we have deduced a lower bound of the gradient $\rho$-Einstein soliton satisfying some curvature conditions. We have also found some conditions of the gradient $\rho$-Einstein soliton for being non-shrinking and non-expanding. Finally, section 2 deals with the non-parabolic behavior of Schouten soliton. 

\section{Diameter estimation}
\begin{thm}\label{resth1}
 Let $(M,g,f)$ be a complete non-compact gradient $\rho$-Einstein soliton with bounded Ricci curvature, i.e., $|Ric|\leq c$ for some constant $c$, $\rho R \geq c_1\lambda$ for some constant $c_1$ and $\lim\limits_{s_0\rightarrow \infty}\int_{0}^{s_0}\nabla^2 f(X,X)$ is finite. Then the $\rho$-Einstein soliton is non-shrinking if $(1+c_1)>0$ and non-expanding if $(1+c_1)<0$.
 \end{thm}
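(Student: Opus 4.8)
The plan is to contract the defining equation $(\ref{res1})$ against a fixed unit direction, integrate along a geodesic ray, and then play the finiteness hypothesis on $\int_0^{s_0}\nabla^2 f(X,X)$ against the two curvature bounds to pin down the sign of $\lambda$. First I would evaluate $(\ref{res1})$ on a unit vector $X$ to get the pointwise identity
\[
\nabla^2 f(X,X) = \lambda + \rho R - Ric(X,X).
\]
Fixing a point $p$ and a unit-speed geodesic ray with tangent $X$, integrating this over $[0,s_0]$ gives
\[
\int_0^{s_0}\nabla^2 f(X,X)\,ds = \lambda s_0 + \int_0^{s_0}\rho R\,ds - \int_0^{s_0}Ric(X,X)\,ds,
\]
and feeding in $\rho R \geq c_1\lambda$ together with $Ric(X,X)\leq |Ric|\leq c$ produces the lower bound
\[
\int_0^{s_0}\nabla^2 f(X,X)\,ds \;\geq\; (1+c_1)\lambda\,s_0 - c\,s_0.
\]

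The conclusion should then follow from a single contradiction argument covering both cases at once. In the shrinking case one assumes $\lambda>0$ and in the expanding case $\lambda<0$; under the stated sign of $1+c_1$ (positive in the first case, negative in the second) the product $(1+c_1)\lambda$ is strictly positive in both. Hence the leading coefficient on the right-hand side is positive, the bound forces $\int_0^{s_0}\nabla^2 f(X,X)\,ds \to +\infty$ as $s_0\to\infty$, and this contradicts the assumed finiteness of $\lim_{s_0\to\infty}\int_0^{s_0}\nabla^2 f(X,X)$. Therefore $\lambda\leq 0$ when $1+c_1>0$ (non-shrinking) and $\lambda\geq 0$ when $1+c_1<0$ (non-expanding).

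The hard part is the Ricci contribution $\int_0^{s_0}Ric(X,X)\,ds$: since $|Ric|\leq c$ only bounds it linearly, by $c\,s_0$, the estimate above carries a competing linear term $-c\,s_0$ of the same order as the main term $(1+c_1)\lambda\,s_0$, so making the divergence genuine requires ensuring the main coefficient dominates. The cleaner route, which I would actually follow to sidestep this ambiguity, is to divide by $s_0$ and observe that convergence of the integral forces the Cesàro average $\tfrac{1}{s_0}\int_0^{s_0}\nabla^2 f(X,X)\,ds \to 0$; reading the resulting asymptotic slope identity $\lambda + \overline{\rho R} - \overline{Ric(X,X)} = 0$ together with $\overline{\rho R}\geq c_1\lambda$ is where the sign of $\lambda$ is extracted. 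Getting the sign bookkeeping right in this averaged identity, and choosing the ray so that the bounds on $\rho R$ and $Ric(X,X)$ cooperate, is the step that needs the most care.
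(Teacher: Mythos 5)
Your set-up (contracting (\ref{res1}) with the unit tangent $X$ and integrating along a geodesic) is the same as the paper's first step, and you have correctly located the obstruction: $|Ric|\leq c$ only controls $\int_0^{s_0}Ric(X,X)\,ds$ by $c\,s_0$, which competes at the same linear order as the main term $(1+c_1)\lambda s_0$. However, your proposed repair does not close this gap. Dividing by $s_0$ and using that a convergent integral has Ces\`aro average tending to $0$ gives, in the limit, only $(1+c_1)\lambda \leq \limsup_{s_0\to\infty}\frac{1}{s_0}\int_0^{s_0}Ric(X,X)\,ds$, and under your reasoning the right-hand side is merely known to lie in $[-c,c]$. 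So the averaged identity yields $(1+c_1)\lambda\leq c$, not $(1+c_1)\lambda\leq 0$, and no "choice of ray" can improve this: the pointwise bound $|Ric|\leq c$ by itself never forces the average of $Ric(X,X)$ along a geodesic to vanish. Your argument in fact never uses that the geodesic is minimizing, and that is exactly the information that is missing.

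The missing ingredient, which is the step the paper's proof turns on, is the second variation of arclength. Since $\gamma$ is minimizing on $[0,s_0]$, for every function $\psi$ vanishing at the endpoints one has $\int_0^{s_0}\psi^2 Ric(X,X)\,ds\leq (n-1)\int_0^{s_0}|\psi'(s)|^2ds$. Taking the standard cutoff ($\psi(s)=s$ on $[0,1]$, $\psi\equiv 1$ on $[1,s_0-1]$, $\psi(s)=s_0-s$ on $[s_0-1,s_0]$) and adding back the term $\int_0^{s_0}(1-\psi^2)Ric(X,X)\,ds$, which is supported only in the two unit end-intervals, gives
$$\int_0^{s_0}Ric(X,X)\,ds\;\leq\; 2(n-1)+\sup_{B(p,1)}|Ric|+\sup_{B(\gamma(s_0),1)}|Ric|\;\leq\;2(n-1)+2c,$$
a bound \emph{independent of} $s_0$: the Ricci bound is needed only on unit balls about the two endpoints, not along the whole geodesic. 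Feeding this into your integrated identity gives $(1+c_1)\lambda s_0-\int_0^{s_0}\nabla^2f(X,X)\,ds\leq 2(n-1)+2c$, and letting $s_0\to\infty$ with the finiteness hypothesis forces $(1+c_1)\lambda\leq 0$, which is the stated two-case conclusion. Equivalently, in your Ces\`aro language, the second-variation inequality is precisely what makes the average of $Ric(X,X)$ tend to zero; once that is in place, your averaging argument goes through.
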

\begin{proof}
Let us consider a length minimizing normal geodesic $\gamma: [0,s_0]\rightarrow M$ for some positive, arbitrarily large $s_0$. Take $p=\gamma(0)$ and $X(s)=\gamma'(s)$ for $s>0$. Then $X$ is the unit tangent vector along $\gamma$. Now integrating $(\ref{res1})$ along $\gamma$, we get
\begin{eqnarray}\label{r6}
\int_{0}^{s_0}Ric(X,X)&=& \int_{0}^{s_0}(\lambda+\rho R) g(X,X)-\int_{0}^{s_0}\nabla^2 f(X,X)\nonumber \\
&\geq& \lambda(1+c_1) s_0-\int_{0}^{s_0}\nabla^2 f(X,X).
\end{eqnarray}
Again, the second variation of arc length implies that
\begin{equation}
\int_{0}^{s_0}\psi^2Ric(X,X)\leq (n-1)\int_{0}^{s_0}|\psi'(s)|^2 ds,
\end{equation}
for every non-negative function $\psi$ defined on $[0,s_0]$ with $\psi(0)=\psi(s_0)=0$. We now choose the function $\psi$ as follows:
\[\psi(s)= \begin{cases} 
      s & s\in[0,1] \\
      1 & s\in[1,s_0-1] \\
      s_0-s & s\in[s_0-1,s_0].
   \end{cases}
\]
Then, we have
\begin{eqnarray}\label{r2} 2(n-1)+\sup_{B(p,1)}|Ric|+\sup_{B(\gamma(s_0),1)}|Ric|&\geq & (n-1)\int_{0}^{s_0}|\psi'(s)|^2 ds+\int_{0}^{s_0}(1-\psi^2)Ric(X,X)ds\nonumber\\
&\geq & \int_{0}^{s_0}\psi^2 Ric(X,X)ds+\int_{0}^{s_0}(1-\psi^2)Ric(X,X)ds\nonumber\\
&=&\int_{0}^{s_0}Ric(X,X)ds .
\end{eqnarray}
Combining (\ref{r6}) and (\ref{r2}), we obtain
\begin{eqnarray}\label{r3}
\lambda(1+c_1) s_0-\int_{0}^{s_0}\nabla^2 f(X,X)&\leq & 2(n-1)+\sup_{B(p,1)}|Ric|+\sup_{B(\gamma(s_0),1)}|Ric|\nonumber \\
&\leq& 2(n-1)+2 c.
\end{eqnarray}
Therefore, taking limit as $s_0\rightarrow \infty$ on both sides of (\ref{r3}), we can write
\begin{equation}\label{r4}
\lim\limits_{s_0\rightarrow\infty}\lambda(1+c_1) s_0-\lim\limits_{s_0\rightarrow\infty}\int_{0}^{s_0}\nabla^2 f(X,X)\leq 2(n-1)+2 c.
\end{equation}
Now since $\lim\limits_{s_0\rightarrow\infty}\int_{0}^{s_0}\nabla^2 f(X,X)$ is finite, hence, if $(1+c_1)>0$ and $\lambda>0$, then $\lim\limits_{s_0\rightarrow\infty}\lambda(1+c_1) s_0=+\infty$, which contradicts the inequality (\ref{r4}). Thus $\lambda\leq 0$, i.e., the $\rho$-Einstein soliton is non-shrinking. In a similar way we can show that if $(1+c_1)<0$ then $\lambda\geq 0$, i.e., the $\rho$-Einstein soliton is non-expanding.  
\end{proof}
 We know that for a non-trivial concave function $f\in C^{\infty}(M)$, the function $(-f)$ is non-constant convex, also it implies that $M$ is non-compact and $\lim\limits_{s_0\rightarrow\infty}\int_{0}^{s_0}\nabla^2 f(X,X)\leq 0$. Thus from the above Theorem \ref{resth1} we can write the following corollary:
 \begin{cor}
 Let $(M,g,f)$ be a complete gradient $\rho$-Einstein soliton with bounded Ricci curvature, i.e., $|Ric|\leq c$ for some constant $c$, $\rho R \geq c_1\lambda$ for some constant $c_1$ and $f$ is a non-constant concave function. Then the $\rho$-Einstein soliton is non-shrinking if $(1+c_1)>0$ and non-expanding if $(1+c_1)<0$.
 \end{cor}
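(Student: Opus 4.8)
The plan is to read the corollary off Theorem \ref{resth1} by checking that the concavity of $f$ forces its hypotheses, so essentially all the analytic work is already carried out in the proof of the theorem and only two facts about concave functions need verifying.

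First I would recall that $f$ being concave means $\nabla^2 f\leq 0$ as a symmetric bilinear form. Consequently, for the unit tangent $X=\gamma'$ along any length-minimizing geodesic one has $\nabla^2 f(X,X)\leq 0$ pointwise, and therefore $\int_{0}^{s_0}\nabla^2 f(X,X)\leq 0$ for every $s_0>0$. In particular the map $s_0\mapsto\int_{0}^{s_0}\nabla^2 f(X,X)$ is nonincreasing and bounded above by $0$, so $\lim_{s_0\to\infty}\int_{0}^{s_0}\nabla^2 f(X,X)$ exists in $[-\infty,0]$; this is exactly the one-sided information recorded in the remark preceding the corollary.

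Next I would justify the standing non-compactness that Theorem \ref{resth1} presupposes. A non-constant concave function cannot exist on a compact manifold: integrating $\Delta f=\mathrm{tr}\,\nabla^2 f\leq 0$ over a compact $M$ gives $\int_M\Delta f=0$ by the divergence theorem, which forces $\Delta f\equiv 0$, hence $\nabla^2 f\equiv 0$, so $f$ would be constant, a contradiction. Thus $M$ is non-compact and Theorem \ref{resth1} is applicable. Then I would invoke the integrated soliton identity (\ref{r3}) from its proof,
\[
\lambda(1+c_1)s_0-\int_{0}^{s_0}\nabla^2 f(X,X)\leq 2(n-1)+2c,
\]
and use $-\int_{0}^{s_0}\nabla^2 f(X,X)\geq 0$ from the first step to obtain $\lambda(1+c_1)s_0\leq 2(n-1)+2c$ for all $s_0>0$. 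Letting $s_0\to\infty$: if $(1+c_1)>0$ and $\lambda>0$ the left side diverges to $+\infty$, a contradiction, so $\lambda\leq 0$ and the soliton is non-shrinking; symmetrically, if $(1+c_1)<0$ and $\lambda<0$ then $\lambda(1+c_1)>0$ and the same divergence yields a contradiction, so $\lambda\geq 0$ and the soliton is non-expanding.

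The main obstacle, such as it is, lies in the first step: Theorem \ref{resth1} is stated under the assumption that the limit of the Hessian integral is \emph{finite}, whereas concavity only guarantees it lies in $[-\infty,0]$ and could in principle diverge to $-\infty$. The point I would stress is that finiteness is not actually needed here — what drives the contradiction is the sign of the integral, and the one-sided bound $\int_{0}^{s_0}\nabla^2 f(X,X)\leq 0$ makes the term $-\int_{0}^{s_0}\nabla^2 f(X,X)$ nonnegative, so it can only reinforce the blow-up of $\lambda(1+c_1)s_0$. Hence the argument goes through verbatim whether the limit is finite or $-\infty$, and the corollary follows.
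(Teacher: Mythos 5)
Your proof is correct, and its overall architecture is the same as the paper's: note that a non-constant concave $f$ forces $M$ to be non-compact and forces $\int_0^{s_0}\nabla^2 f(X,X)\leq 0$, then feed this into Theorem \ref{resth1}. However, your handling of the key hypothesis is genuinely different, and better. The paper treats Theorem \ref{resth1} as a black box, even though its stated hypothesis --- \emph{finiteness} of $\lim_{s_0\to\infty}\int_0^{s_0}\nabla^2 f(X,X)$ --- is not guaranteed by concavity, which only places the limit in $[-\infty,0]$; strictly speaking this leaves a small gap in the paper's deduction of the corollary. You identify exactly this mismatch and repair it by re-entering the proof of the theorem at inequality (\ref{r3}): since $-\int_0^{s_0}\nabla^2 f(X,X)\geq 0$, you get $\lambda(1+c_1)s_0\leq 2(n-1)+2c$ for all $s_0$, and the divergence argument runs whether the limit is finite or $-\infty$. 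Your divergence-theorem proof that a non-constant concave function cannot live on a compact manifold likewise substantiates a claim the paper merely asserts. In short, the paper's shortcut buys brevity at the cost of invoking a theorem under hypotheses it has not verified; your version closes that loop, and in effect shows the theorem remains true with ``finite limit'' weakened to ``limit bounded above.''
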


\begin{thm}\label{th3}
 Let $(M,g,f)$ be a compact gradient $\rho$-Einstein soliton with $c_2 g\leq Ric \leq c_3 g$. Then for $\rho>0$,
 $$ diam(M)\geq max \Big\{\sqrt{\frac{2(f_{max}-f_{min})}{\lambda+n\rho c_3-c_2}}, \sqrt{\frac{2(f_{max}-f_{min})}{c_3-\lambda-n\rho c_2}}, \sqrt{\frac{8(f_{max}-f_{min})}{(n\rho+1)(c_3-c_2)}}\Big\},$$ 
 and for $\rho<0,$
  $$ diam(M)\geq max \Big\{\sqrt{\frac{2(f_{max}-f_{min})}{\lambda+n\rho c_2-c_2}}, \sqrt{\frac{2(f_{max}-f_{min})}{c_3-\lambda-n\rho c_3}}, \sqrt{\frac{8(f_{max}-f_{min})}{(n\rho-1)(c_2-c_3)}}\Big\},$$
 where the numbers $c_2$, $c_3$ are denoted by
 \begin{eqnarray*}
 c_2=inf_{x\in M}\{Ric(v,v):v\in T_x M, g(v,v)=1\},\\
  c_3=sup_{x\in M}\{Ric(v,v):v\in T_x M, g(v,v)=1\}.
 \end{eqnarray*}
 \end{thm}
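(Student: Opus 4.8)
The plan is to reduce everything to a one-dimensional analysis of $f$ along a suitable minimizing geodesic, in the spirit of classical Hessian-comparison diameter bounds. Since $M$ is compact, $f$ attains its minimum at some point $p$ and its maximum at some point $q$, and both are interior critical points, so $\nabla f(p)=\nabla f(q)=0$. I would fix a unit-speed minimizing geodesic $\gamma:[0,L]\to M$ from $p$ to $q$, where $L=d(p,q)\le diam(M)$, and set $X=\gamma'$ and $\phi(s)=f(\gamma(s))$. Then $\phi'(s)=g(\nabla f,X)$ and $\phi''(s)=\nabla^2 f(X,X)$, with the crucial boundary data $\phi'(0)=\phi'(L)=0$ coming from the two critical points.

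The heart of the argument is to convert the soliton equation $(\ref{res1})$ into pointwise bounds on $\phi''$. Evaluating $(\ref{res1})$ on the unit vector $X$ gives $\phi''(s)=(\lambda+\rho R)-Ric(X,X)$. The pinching $c_2 g\le Ric\le c_3 g$ yields $nc_2\le R\le nc_3$, and tracking the sign of $\rho$ (which reverses the inequalities for the term $\rho R$) produces two-sided bounds $B\le\phi''(s)\le A$. For $\rho>0$ these are $A=\lambda+n\rho c_3-c_2$ and $B=\lambda+n\rho c_2-c_3$, while for $\rho<0$ they become $A=\lambda+n\rho c_2-c_2$ and $B=\lambda+n\rho c_3-c_3$. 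A direct check gives $A-B=(n\rho+1)(c_3-c_2)$ in the first case and $A-B=(n\rho-1)(c_2-c_3)$ in the second, which is exactly the quantity appearing in the third bound.

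With these bounds in hand the three estimates follow from elementary integration. Integrating $\phi''\le A$ forward from $0$ with $\phi'(0)=0$ gives $\phi'(s)\le As$, so that $f_{max}-f_{min}=\int_0^L\phi'\le AL^2/2$, which rearranges into the first term. Symmetrically, integrating $\phi''\ge B$ backward from $L$ with $\phi'(L)=0$ gives $\phi'(s)\le -B(L-s)$, leading to $f_{max}-f_{min}\le -BL^2/2$ and hence the second term. For the sharper third term I would split $\gamma$ at its midpoint: on $[0,L/2]$ use $\phi''\le A$ with $\phi'(0)=0$, and on $[L/2,L]$ use $\phi''\ge B$ with $\phi'(L)=0$; adding the two half-estimates yields $f_{max}-f_{min}\le (A-B)L^2/8$, which rearranges into the third term. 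Taking the maximum of the three lower bounds for $L\le diam(M)$ finishes the proof, and the two cases $\rho>0$ and $\rho<0$ are handled by the corresponding choices of $A$ and $B$ above.

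The only genuine subtlety, and the step I would watch most carefully, is the sign bookkeeping when passing from $nc_2\le R\le nc_3$ to the bounds on $\rho R$: for $\rho<0$ the inequalities reverse, and this is precisely what interchanges the roles of $c_2$ and $c_3$ and produces the two distinct displayed conclusions. Everything else is routine one-variable calculus once the identity $\phi''(s)=(\lambda+\rho R)-Ric(X,X)$ and the boundary conditions $\phi'(0)=\phi'(L)=0$ are in place; the midpoint splitting is the one mildly clever move, as it improves the naive coefficient $4$ (obtained by merely adding the first two estimates) to the stated coefficient $8$.
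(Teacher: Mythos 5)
Your proposal is correct and follows essentially the same route as the paper: both convert the soliton equation and the pinching $c_2 g\leq Ric\leq c_3 g$ into two-sided bounds on $\nabla^2 f(\gamma',\gamma')$, integrate along geodesics emanating from the critical points $p$ and $q$ of $f$ (using $\nabla f(p)=\nabla f(q)=0$), and obtain the factor-$8$ bound by a midpoint argument, with identical case bookkeeping for $\rho>0$ versus $\rho<0$. The only cosmetic difference is that you work along a single minimizing geodesic from $p$ to $q$ with the boundary data $\phi'(0)=\phi'(L)=0$, whereas the paper runs two separate geodesic arguments (from $p$ and from $q$ to an arbitrary point $x$) and then adds the resulting inequalities at a point equidistant from $p$ and $q$; the resulting estimates coincide.
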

\begin{proof}
Taking trace of $c_2 g\leq Ric \leq c_3 g$, we obtain
\begin{equation}\label{eq1}
nc_2\leq R \leq n c_3.
\end{equation}
As $\rho>0$, the above inequality yields
\begin{equation}\label{e7}
n\rho c_2\leq \rho R \leq n\rho c_3.
\end{equation} 
The potential function $f$ has at least one point $p$ where it attains its global minimum value, as $M$ is compact. Let $\gamma$ be a geodesic with $\gamma
 (0)=p$. Then using $(\ref{res1})$ and $(\ref{e7})$ we calculate 
 \begin{eqnarray}\label{e1}
\nonumber g(\nabla f,\gamma')(\gamma (s))&=& g(\nabla f,\gamma')(\gamma (s))-g(\nabla f,\gamma')(\gamma (0))\\
\nonumber &=& \int_{0}^{s} \frac{\partial}{\partial s}g(\nabla f,\gamma')(\gamma (s)) ds\\
\nonumber &=&\int_{0}^{s} \nabla _{\gamma'}g(\nabla f,\gamma')(\gamma (s)) ds\\ \nonumber&=&\int_{0}^{s} \nabla^2 f(\gamma',\gamma')(\gamma (s)) ds\\
&\leq&(\lambda+n\rho c_3-c_2 )s.
 \end{eqnarray}
 Integrating $(\ref{e1})$ we get
$$ f(\gamma(s))-f(p)\leq \frac{(\lambda+n\rho c_3-c_2 )s^2}{2}.$$
Since for every point $x\in M$ there exists a minimizing geodesic joining $p$ and $x$, for all $x\in M$ we have 
\begin{equation}\label{e3}
f(x)-f(p)\leq \Big( \frac{\lambda+n\rho c_3-c_2}{2}\Big)d^2(x,p),
\end{equation}
where $d(x,p)$ is the distance between $x$ and $p$.\\
 In particular, we obtain
$$f_{max}-f_{min}\leq \Big( \frac{\lambda+n\rho c_3-c_2}{2}\Big)d^2,$$
where $d=diam(M)$, is the diameter of the manifold $M$. This gives
$$d^2\geq \Big(\frac{2(f_{max}-f_{min})}{\lambda+n\rho c_3-c_2}\Big).$$
Now we consider a point $q$ at which $f$ attains its global maximum. Let $\gamma$ be a geodesic with $\gamma(0)=q$. Then
\begin{eqnarray}\label{e2}
\nonumber g(\nabla f,\gamma')(\gamma (s))&=& g(\nabla f,\gamma')(\gamma (s))-g(\nabla f,\gamma')(\gamma (0))\\
\nonumber &=& \int_{0}^{s} \frac{\partial}{\partial s}g(\nabla f,\gamma')(\gamma (s)) ds\\
\nonumber &=&\int_{0}^{s} \nabla _{\gamma'}g(\nabla f,\gamma')(\gamma (s)) ds\\ \nonumber&=&\int_{0}^{s} \nabla^2 f(\gamma',\gamma')(\gamma (s)) ds\\
&\geq&\Big(\lambda+n\rho c_2-c_3\Big)s.
 \end{eqnarray}
 Again integrating $(\ref{e2})$ we get
 $$ f(\gamma(s))-f(q)\geq \frac{(\lambda+n\rho c_2-c_3)}{2}s^2.$$
 Since for every point $x\in M$ there exists a minimizing geodesic joining $q$ and $x$, for all $x\in M$ we have 
 $$f(x)-f(q)\geq \Big(\frac{\lambda+n\rho c_2-c_3}{2}\Big)d^2(x,q),$$
 where $d(x,q)$ is the distance between $x$ and $q$.\\
 This implies that
 \begin{equation}\label{e4}
 f(q)-f(x)\leq \Big(\frac{c_3-\lambda-n\rho c_2}{2}\Big)d^2(x,q).
 \end{equation}
  In particular, we obtain
 $$f_{max}-f_{min}\leq \Big(\frac{c_3-\lambda-n\rho c_2}{2}\Big)d^2,$$
which yields
 $$d^2\geq \Big(\frac{2(f_{max}-f_{min})}{c_3-\lambda-n\rho c_2}\Big).$$
 Finally, adding $(\ref{e3})$ and $(\ref{e4})$ for $x$ such that $d(x,p)=d(x,q)\leq\frac{d}{2}$, we get
 \begin{eqnarray*}
 f(q)-f(p)&\leq& \Big(\frac{c_3-\lambda-n\rho c_2}{2}\Big)d^2(x,q)+\Big( \frac{\lambda+n\rho c_3-c_2}{2}\Big)d^2(x,p)\\
 &\leq& \frac{(n\rho+1)(c_3-c_2)}{8}d^2.
 \end{eqnarray*}
 This implies
 $$d^2\geq \frac{8(f_{max}-f_{min})}{(n\rho+1)(c_3-c_2)}.$$
 This proves the first part. For the second part, $\rho<0$, the equation (\ref{eq1}) implies that
 \begin{equation}\label{e5}
 n\rho c_3\leq \rho R \leq n\rho c_2,
 \end{equation} 
 and hence proceeding in a similar way as in the first case, we obtain the second part.
 \end{proof}
\section{Schouten solitons}
A Riemannian manifold $M$ is parabolic if every subharmonic function $u$ on $M$ with $u^*=sup_M u <\infty$, must be constant \cite{AG1999,GX2019}, equivalently, if every positive superharmonic function $u$ on $M$ is constant. Otherwise $M$ is said to be non-parabolic. The Green function $G(x,y)$ on $M$ is defined by (see, \cite{AG1999})
$$G(x,y)=\frac{1}{2}\int_{0}^{\infty}k(t,x,y)dt,$$
where $k(t,x,y)$ is the heat kernel of $M$.
If $p$ is a fixed point on $M$ and $M$ is non-parabolic then there is a unique, minimal, positive Green function and is denoted by $G(p,x)$. The function $l(x)$ is defined by $l(x)=[n(n-2)\omega_n\cdot G(p,x)]^{\frac{1}{2-n}}$, where $\omega_n$ is the volume of the unit ball in the $n$ dimensional Euclidean space $\mathbb{R}^n $. Also the asymptotic volume ratio of $M$ is defined as $V_M=\lim\limits_{r\rightarrow \infty}\frac{Vol(B_r(p))}{\omega_nr^n}$, where $B_r(p)$ is the open ball with radius $r$ and center at $p$. For more details see, \cite{GX2019} and also references therein.
\par In this section first we state one theorem and two lemmas from \cite{VB2021} and \cite{GX2019}, which will be used to prove our results:
\begin{thm}\cite{VB2021}\label{sth1}
Let $(M, g, f)$ be a complete non-compact non-steady Schouten soliton such that the potential function $f$ is not-constant. Then for $\lambda >0$ (resp., $\lambda <0$), $f$ attains a global minimum (resp., maximum) and also $f$ is unbounded above (resp., below). Furthermore,
$$0\leq \lambda R\leq 2(n-1)\lambda^2,$$ 
$$2\lambda(f-f_0)\leq|\nabla f|^2\leq 4\lambda(f-f_0),$$
with $f_0=min_{p\in M} f(p)$, if $\lambda>0$ (resp., $f_0=max_{p\in M} f(p)$, if $\lambda <0$).
\end{thm}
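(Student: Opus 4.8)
The plan is to work directly from the gradient Schouten soliton equation, which is (\ref{res1}) with $\rho=\tfrac{1}{2(n-1)}$, namely $Ric+\nabla^2 f-\tfrac{R}{2(n-1)}g=\lambda g$, whose trace is (\ref{ss1}), i.e. $\Delta f=n\lambda-\tfrac{n-2}{2(n-1)}R$. First I would establish the identity that makes the Schouten value of $\rho$ special: taking the divergence of the soliton equation, using the contracted second Bianchi identity $\nabla^iR_{ij}=\tfrac12\nabla_jR$ and the commutation $\Delta\nabla_jf=\nabla_j\Delta f+R_{jk}\nabla^kf$, the total coefficient of $\nabla_jR$ cancels precisely when $\rho=\tfrac1{2(n-1)}$, leaving $R_{jk}\nabla^kf=0$. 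Thus $\nabla f$ lies in the kernel of the Ricci tensor at every point, a structural fact I would exploit repeatedly.

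From this I would extract the two first-order consequences that drive all the estimates. Differentiating $|\nabla f|^2$ and using $\nabla^2 f=(\lambda+\tfrac{R}{2(n-1)})g-Ric$ together with $Ric(\nabla f,\cdot)=0$ gives $\nabla|\nabla f|^2=\big(2\lambda+\tfrac{R}{n-1}\big)\nabla f$, so $|\nabla f|^2$ is essentially a function of $f$ alone. Taking the divergence of $R_{ik}\nabla^kf=0$ and substituting the Hessian yields $\langle\nabla R,\nabla f\rangle=2|Ric|^2-2\lambda R-\tfrac{R^2}{n-1}$; and because $Ric$ has a zero eigenvalue in the $\nabla f$ direction, the Cauchy--Schwarz inequality applied to the remaining $(n-1)$ directions gives $|Ric|^2\ge\tfrac{R^2}{n-1}$, hence $\langle\nabla R,\nabla f\rangle\ge\tfrac{R}{n-1}\big(R-2(n-1)\lambda\big)$.

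The technical heart --- and the step I expect to be the main obstacle --- is the scalar curvature pinching $0\le\lambda R\le 2(n-1)\lambda^2$. The plan is to promote the inequality above to a drift-Laplacian differential inequality for $R$, derived from the second Bianchi identity and the soliton structure, and then to run a maximum-principle argument. For $\lambda>0$ the sign $R\ge 0$ is a Chen-type conclusion: along gradient-descent of $f$ a negative value of $R$ would be forced to decrease without bound before reaching the critical set, contradicting finiteness of $R$ there; making this rigorous on a complete non-compact manifold requires an Omori--Yau or cut-off argument, and this is where the real difficulty lies. The upper bound $R\le 2(n-1)\lambda$ follows from the same inequality run along gradient-ascent, with the sign of $R-2(n-1)\lambda$ governing the monotonicity of $R$ and precluding $R>2(n-1)\lambda$.

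With $0\le\tfrac{R}{n-1}\le 2\lambda$ in hand (case $\lambda>0$), I would finish by integrating the first-order identity. The functions $|\nabla f|^2-2\lambda(f-f_0)$ and $|\nabla f|^2-4\lambda(f-f_0)$ have gradients $\tfrac{R}{n-1}\nabla f$ and $\big(\tfrac{R}{n-1}-2\lambda\big)\nabla f$, hence are respectively non-decreasing and non-increasing along the gradient flow; evaluating at a minimum point of $f$ (where $\nabla f=0$ and both functions vanish) gives $2\lambda(f-f_0)\le|\nabla f|^2\le 4\lambda(f-f_0)$. This quadratic control makes $\sqrt{f-f_0}$ comparable to the distance function, so $f$ is proper and bounded below, whence it attains a global minimum and is unbounded above; the case $\lambda<0$ follows by the symmetric argument obtained on replacing $f$ with $-f$, interchanging minimum with maximum and ``unbounded above'' with ``unbounded below''.
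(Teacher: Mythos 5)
First, a point of reference: the paper does not prove this statement at all --- it is quoted verbatim from Borges \cite{VB2021} and used as a black box in Section 3, so there is no internal proof to compare against. Judged on its own terms, your preparatory work is correct and is indeed the standard entry point for Schouten solitons: taking the divergence of $Ric+\nabla^2 f-\tfrac{R}{2(n-1)}g=\lambda g$ and using the contracted Bianchi identity together with $\nabla_j\Delta f=-\tfrac{n-2}{2(n-1)}\nabla_jR$ does give $Ric(\nabla f,\cdot)=0$ precisely at the Schouten value of $\rho$; the consequences $\nabla|\nabla f|^2=\bigl(2\lambda+\tfrac{R}{n-1}\bigr)\nabla f$, $\langle\nabla R,\nabla f\rangle=2|Ric|^2-2\lambda R-\tfrac{R^2}{n-1}$, and the Cauchy--Schwarz bound $|Ric|^2\geq\tfrac{R^2}{n-1}$ (valid where $\nabla f\neq 0$, using that $\nabla f$ is a null eigenvector) are all correctly derived.

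However, the proposal has two genuine gaps. The first you concede yourself: the pinching $0\leq\lambda R\leq 2(n-1)\lambda^2$ is the core of the theorem, and what you offer for it is a heuristic, not a proof. The inequality $\langle\nabla R,\nabla f\rangle\geq\tfrac{R}{n-1}\bigl(R-2(n-1)\lambda\bigr)$ gives an ODE-type differential inequality only along integral curves of $\nabla f$, and on a complete non-compact manifold with no assumed curvature or gradient bounds such a curve may reach infinity in finite parameter time (its speed is $|\nabla f|$, which you have not yet controlled --- controlling it is the \emph{other} half of the theorem), the critical set may a priori be empty, and quadratic blow-up of $R$ along a curve is not by itself a contradiction since no bound on $R$ is hypothesized. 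So the maximum-principle step that everything else hinges on is missing, not merely unpolished. The second gap is a circularity: you integrate $\nabla u=\tfrac{R}{n-1}\nabla f$ and $\nabla v=\bigl(\tfrac{R}{n-1}-2\lambda\bigr)\nabla f$ starting ``at a minimum point of $f$,'' but the existence of a global minimum is one of the conclusions, which you then recover \emph{from} those very estimates. Even granting a minimum exists, monotonicity along gradient trajectories yields the global inequalities only if every point connects by a descent line to the minimum level; note that the claimed bound $2\lambda(f-f_0)\leq|\nabla f|^2$ forces every critical point of $f$ to lie in $f^{-1}(f_0)$, so any correct proof must rule out critical points above the minimum level, which your sketch never addresses. (A smaller slip: the case $\lambda<0$ is not literally obtained ``by replacing $f$ with $-f$,'' since that substitution destroys the soliton equation; one must rerun the argument with all signs reversed.) The published proof in \cite{VB2021} resolves exactly these issues by exploiting rectifiability --- $R$ and $|\nabla f|^2$ descend to functions of $f$ on the regular set --- and performing a careful phase-plane analysis of the resulting ODE system, which is the machinery your outline would need to import or reconstruct.
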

\begin{lem}\cite{GX2019}\label{gx1}
If $(M,g)$ is an n-dimensional complete and non-compact Riemannian manifold such that it is not-parabolic with non-negative Ricci curvature, then  $$\lim\limits_{r\rightarrow\infty}\frac{\int_{l\leq r}|\nabla l|^3}{r^n}=(V_M)^{\frac{1}{n-2}}\omega_n.$$
\end{lem}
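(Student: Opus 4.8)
The plan is to treat $l$ as a generalized distance function adapted to the Green function and to reduce the statement to a monotonicity formula together with the large-scale comparison between $l$ and the Riemannian distance. First I would record the basic structural facts: since $M$ is non-parabolic, the minimal positive Green function $G(p,\cdot)$ exists, is harmonic on $M\setminus\{p\}$, and blows up at $p$ exactly like the Euclidean fundamental solution, so that $l(x)=[n(n-2)\omega_n G(p,x)]^{1/(2-n)}$ is smooth away from $p$, proper, and satisfies $l\to\infty$ at infinity. The normalization is chosen precisely so that in flat $\mathbb{R}^n$ one has $l=d(p,\cdot)$ and $|\nabla l|\equiv 1$; this is the model to keep in mind throughout, and in that case the asserted limit reads $\omega_n=1\cdot\omega_n$.

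Second, the heart of the argument is a monotonicity formula in the spirit of Colding: using $Ric\ge 0$, a Bochner computation for the harmonic function $G$ (equivalently for $l$) shows that the quantity $V(r):=r^{-n}\int_{\{l\le r\}}|\nabla l|^3$ is monotone non-increasing in $r$. This is where nonnegative Ricci curvature enters essentially, and it is also the step I expect to be the main obstacle, since the monotonicity rests on a delicate evaluation of $\tfrac{d}{dr}V(r)$ via the coarea identity $\int_{\{l\le r\}}|\nabla l|^3=\int_0^r\big(\int_{\{l=t\}}|\nabla l|^2\big)\,dt$ combined with the harmonicity of $G$ and a Reilly/second-variation inequality on the level sets $\{l=t\}$. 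Granting the monotonicity, the limit $\lim_{r\to\infty}V(r)$ exists automatically (and is bounded above by the Euclidean value $\omega_n$, consistent with $V_M\le 1$ from Bishop--Gromov).

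Finally I would identify the limiting constant with $(V_M)^{1/(n-2)}\omega_n$. The key input is the sharp large-scale asymptotics of the Green function on manifolds with $Ric\ge 0$ and positive asymptotic volume ratio (Li--Tam--Wang, Cheeger--Colding), which give $G(p,x)\sim \frac{1}{n(n-2)\omega_n V_M}\,d(p,x)^{2-n}$ at infinity, hence $l(x)\sim (V_M)^{1/(n-2)}\,d(p,x)$ and $|\nabla l|\to (V_M)^{1/(n-2)}$ in an averaged sense. Accepting these, the sublevel set $\{l\le r\}$ is comparable to the metric ball $B_R(p)$ with $R=(V_M)^{-1/(n-2)}r$, and a direct bookkeeping of exponents,
$$\int_{\{l\le r\}}|\nabla l|^3 \;\approx\; (V_M)^{3/(n-2)}\,\mathrm{Vol}(B_R)\;\approx\;(V_M)^{3/(n-2)}\,V_M\,\omega_n\,R^{\,n},$$
together with $R^{\,n}=(V_M)^{-n/(n-2)}r^n$ and the identity $\tfrac{3}{n-2}+1-\tfrac{n}{n-2}=\tfrac{1}{n-2}$, collapses to $(V_M)^{1/(n-2)}\omega_n\, r^n$, which yields the claimed value after dividing by $r^n$. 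The one genuine subtlety is that these pointwise asymptotics must be upgraded to the stated integral limit; this is exactly what the monotonicity of $V(r)$ provides, since it converts the heuristic comparison into a bona fide limit by excluding oscillation of the volume-type ratio.
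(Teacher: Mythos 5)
Note first that the paper contains no proof of this statement: Lemma \ref{gx1} is quoted from Xu \cite{GX2019} as a known result, so your reconstruction has to be judged against the literature it comes from (Colding's monotonicity formulas and Xu's paper) rather than against anything in the text. Your toolbox is the right one, but the two load-bearing steps are missing. The step you yourself flag as the heart of the argument --- monotonicity of $V(r)=r^{-n}\int_{\{l\le r\}}|\nabla l|^3$ --- is asserted with only a gesture toward ``Bochner plus Reilly'', and this quantity is \emph{not} one of Colding's monotone quantities: for $Ric\ge 0$ on a non-parabolic manifold his theorems give monotonicity of the level-set quantity $A(r)=r^{1-n}\int_{\{l=r\}}|\nabla l|^3$ and of the sublevel quantity $r^{-n}\int_{\{l\le r\}}|\nabla l|^4$, whereas by the coarea formula your $V(r)$ is built from $\int_{\{l=t\}}|\nabla l|^2$, a different exponent. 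No soft manipulation bridges the gap: Cauchy--Schwarz against the flux identity $\int_{\{l=t\}}|\nabla l|=n\omega_n t^{n-1}$ gives only the one-sided bound $\int_{\{l=t\}}|\nabla l|^2\le t^{n-1}\sqrt{n\omega_n A(t)}$, and a matching lower bound requires knowing that $|\nabla l|$ concentrates at the constant value $(V_M)^{1/(n-2)}$ on large level sets --- which is essentially the conclusion you are trying to prove. Monotonicity for the exponent you need is a hard theorem in its own right (it is essentially the $p=2$ case of level-set monotonicity formulas of Agostiniani--Fogagnolo--Mazzieri type), so as written your key lemma is an unproved assertion.

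The second gap is that your identification of the limit only operates when $V_M>0$. The asymptotics $G\sim\frac{1}{n(n-2)\omega_n V_M}\,d^{2-n}$, hence $l\sim (V_M)^{1/(n-2)}d$ and $\{l\le r\}\approx B_{(V_M)^{-1/(n-2)}r}$, are theorems of Li--Tam--Wang and Colding--Minicozzi \emph{under the hypothesis of maximal (Euclidean) volume growth}; when $V_M=0$ --- a case the lemma covers, the limit then being $0$ --- the Green function has no such power-law behaviour, the sublevel sets of $l$ are not comparable to metric balls of radius comparable to $r$, and your exponent bookkeeping has nothing to attach to. That case needs a separate argument, e.g.\ the degeneration statement $\lim_{r\rightarrow\infty}\sup_{\{l=r\}}|\nabla l|=0$ (which is exactly Lemma \ref{gx2} of this paper) combined with control of the volume of $\{l\le r\}$ relative to $r^n$, neither of which appears in your sketch. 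Finally, even in the case $V_M>0$, the phrase ``$|\nabla l|\to (V_M)^{1/(n-2)}$ in an averaged sense'' is doing real work and must be made precise (say, via $|\nabla l|\le 1$ together with level-set $L^2$ convergence) before the heuristic comparison of integrals becomes a proof.
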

\begin{lem}\cite{GX2019}\label{gx2}
If $(M,g)$ is an $n(\geq 3)$-dimensional complete Riemannian manifold with non-negative Ricci curvature and the volume growth is maximal (resp., not maximal), then $|\nabla l|\leq 1$ (resp., $\lim\limits_{r\rightarrow \infty}\sup_{t(x)=r}|\nabla l|(x)=0$).
\end{lem}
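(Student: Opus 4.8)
The plan is to work throughout with $b:=l=[n(n-2)\omega_n\,G(p,\cdot)]^{1/(2-n)}$, which is smooth and positive on $M\setminus\{p\}$ because the minimal positive Green function $G(p,\cdot)$ is harmonic and positive there (note that the very existence of $G$, hence of $l$, already forces $M$ to be non-parabolic, so Lemma~\ref{gx1} is available). The first step is to record the exact elliptic identity that $b$ inherits from the harmonicity of $G$: writing $G=c\,b^{2-n}$ with $c=[n(n-2)\omega_n]^{-1}$ and imposing $\Delta G=0$ on $M\setminus\{p\}$, a direct computation gives
\begin{equation}\label{planidentity}
b\,\Delta b=(n-1)\,|\nabla b|^{2}.
\end{equation}
This is the curved analogue of the Euclidean relation $\Delta|x|=(n-1)/|x|$, for which $b(x)=|x|$ and $|\nabla b|\equiv 1$; the whole lemma asserts that the flat value $|\nabla b|=1$ serves as a sharp upper barrier precisely when the volume growth is maximal.

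Next I would insert \eqref{planidentity} into the Bochner formula
$$\tfrac12\Delta|\nabla b|^{2}=|\nabla^{2}b|^{2}+\langle\nabla b,\nabla\Delta b\rangle+\mathrm{Ric}(\nabla b,\nabla b).$$
Under $\mathrm{Ric}\ge 0$ the curvature term drops, and substituting $\Delta b$ from \eqref{planidentity} produces an inequality for $w:=|\nabla b|^{2}$ containing a bad term $-(n-1)w^{2}/b^{2}$ that the crude bound $|\nabla^{2}b|^{2}\ge(\Delta b)^{2}/n$ cannot absorb; the point is that one must instead use the refined Kato inequality adapted to \eqref{planidentity} and pass to a suitably $b$-weighted drift-Laplacian $L$, for which $w$ becomes a genuine subsolution $Lw\ge 0$ on the punctured manifold. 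Its two ends are the pole $p$ and infinity. Near $p$ the Green function has the Euclidean singularity $G\sim c\,d(p,\cdot)^{2-n}$, so $b\sim d(p,\cdot)$ and hence $|\nabla b|\to 1$ as $x\to p$; at infinity $|\nabla b|\to V_M^{1/(n-2)}$. Since $V_M\le 1$ by Bishop--Gromov, both boundary values are at most $1$, so a maximum principle applied to $w$ along an exhaustion isolating $p$ and reaching out to infinity yields $w\le 1$, i.e. $|\nabla b|\le 1$, which is the maximal-volume-growth conclusion.

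The analytic heart, and the step I expect to be the main obstacle, is the rigorous identification of the asymptotics of $|\nabla b|$ at infinity with $V_M^{1/(n-2)}$. I would organise this through the monotonicity machinery underlying Lemma~\ref{gx1}: under $\mathrm{Ric}\ge 0$ the normalised level-set and sublevel-set quantities built from $|\nabla b|$ are monotone in $r$, their small-$r$ limits are pinned to the Euclidean value by the constant $n(n-2)\omega_n$ in the definition of $l$, and their large-$r$ limits are exactly the asymptotic volume ratio supplied by Lemma~\ref{gx1}. Establishing this monotonicity is where the work lies: it requires the coarea formula together with integration by parts on the generically smooth level sets $\{b=r\}$, control of the critical set of $b$ through Sard's theorem, and barrier/exhaustion arguments to justify the limits at the pole and at infinity on a non-compact manifold.

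For the non-maximal case $V_M=0$ the same framework closes the argument quickly. Then Lemma~\ref{gx1} gives $\lim_{r\to\infty}r^{-n}\int_{\{l\le r\}}|\nabla l|^{3}=0$, so the (monotone) averaged bulk of $|\nabla l|^{3}$ decays to zero; combining this with the pointwise bound $|\nabla l|\le 1$ from the first part and the elliptic Harnack/gradient estimate for the subsolution $w$ upgrades the averaged decay to the uniform statement $\lim_{r\to\infty}\sup_{l(x)=r}|\nabla l|(x)=0$ along the level sets, as claimed. By comparison, the Bochner computation and the derivation of \eqref{planidentity} are routine; the genuine difficulty is the monotonicity formula and the evaluation of its endpoint limits.
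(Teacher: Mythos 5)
First, a point of order: the paper contains \emph{no proof} of Lemma \ref{gx2}. It is imported verbatim (typo ``$t(x)=r$'' and all) from \cite{GX2019}, so there is no internal argument to compare yours against; I am judging your proposal against the argument in the literature that \cite{GX2019} itself relies on, namely Colding's monotonicity paper (Acta Math.\ 2012). Against that benchmark, your core mechanism is the right one: the identity $b\Delta b=(n-1)|\nabla b|^2$ for $b=l$, the observation that the crude bound $|\mathrm{Hess}\,b|^2\ge(\Delta b)^2/n$ cannot absorb the term $-(n-1)|\nabla b|^4/b^2$, and the refined Hessian decomposition that turns $w=|\nabla b|^2$ into a subsolution of the drift operator $\mathcal{L}w=\Delta w-\tfrac{2(n-2)}{b}\langle\nabla b,\nabla w\rangle$ (equivalently $\mathrm{div}(b^{-2(n-2)}\nabla w)\ge 0$) are exactly Colding's steps, and your boundary analysis at the pole ($w\to1$) and the use of Bishop--Gromov ($V_M\le1$) are correct.

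The genuine gap is where you anchor the maximum principle at infinity. You close the argument with ``$|\nabla b|\to V_M^{1/(n-2)}$ at infinity,'' to be extracted from the monotonicity machinery behind Lemma \ref{gx1}. This does not work as stated, for three reasons. (i) The monotone quantities and Lemma \ref{gx1} are \emph{integral} statements; their limits constrain level-set averages of $|\nabla b|^3$ and say nothing pointwise about $\sup_{b=r}|\nabla b|$ without a mean-value/Moser iteration for $\mathcal{L}$, whose hypotheses (doubling and Poincar\'e for the weighted measure, comparability of $b$ with the distance) themselves lean on Green-function asymptotics --- so the entire difficulty of the lemma is hidden inside this one unexecuted step. (ii) In the non-maximal case $V_M=0$, the asymptotic claim you are assuming \emph{is} the second half of Lemma \ref{gx2}; your plan therefore presupposes part of the conclusion. (iii) In \cite{GX2019} the limit identity of Lemma \ref{gx1} is established \emph{using} the pointwise bound $|\nabla b|\le1$ as an input, so deducing $|\nabla b|\le1$ from Lemma \ref{gx1} inverts the logical order of the two lemmas.

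The fix is that no information about $|\nabla b|$ at infinity is needed at all. Under $\mathrm{Ric}\ge0$ and non-parabolicity one has: $w\le C(n)$ globally (Cheng--Yau applied to the harmonic function $G$ on $B(x,d(p,x))$, combined with the Li--Yau lower bound $G\ge c(n)d^{2-n}$, which gives $b\le C(n)d$); and $b\to\infty$ at infinity (Li--Yau upper bound $G\le C(n)\int_{d}^{\infty}t\,V(B_t)^{-1}dt\to0$), so the sublevel sets $\{b\le R\}$ exhaust $M$. Moreover, for $0<\alpha<n-2$ the function $b^{\alpha}$ is a strict $\mathcal{L}$-supersolution: $\mathcal{L}b^{\alpha}=\alpha\bigl(\alpha-(n-2)\bigr)b^{\alpha-2}w\le0$. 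Applying the maximum principle to $w-\epsilon b^{\alpha}$ on $\{s\le b\le R\}$ (away from the critical set of $b$, which is handled by the standard approximation), the outer boundary contributes at most $C(n)-\epsilon R^{\alpha}\to-\infty$, so letting $R\to\infty$, then $s\to0$ (using $\sup_{b=s}w\to1$ at the pole), then $\epsilon\to0$, yields $w\le1$ outright --- with no volume-growth hypothesis and no appeal to Lemma \ref{gx1}. With part one secured this way, your outline for the non-maximal case (Lemma \ref{gx1} with $V_M=0$ plus a mean-value inequality for the $\mathcal{L}$-subsolution $w$ to upgrade integral decay to $\lim_{r\to\infty}\sup_{l(x)=r}|\nabla l|=0$) is the correct architecture and matches \cite{GX2019}.
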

\begin{thm}\label{th1}
Let $(M,g, f)$ be a complete non-compact gradient shrinking Schouten soliton of dimension $n(> 4)$ with $R\leq k<\frac{(n-1)(n-4)}{n-2}$ for some real constant $k$ and the potential function $f$ is positive non-constant. Then all the ends of $M$ are non-parabolic.
\end{thm}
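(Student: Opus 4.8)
The plan is to turn the scalar-curvature hypothesis into a \emph{strict} lower bound of the form $\Delta f\ge \delta>2$, to combine this with the gradient estimates of Theorem~\ref{sth1} so as to produce volume growth strictly faster than $r^2$ on each end, and then to conclude via the volume test for non-parabolicity (the sufficient condition $\int^\infty r\,dr/\mathrm{Vol}(B_r)<\infty$, see \cite{AG1999}). The whole argument hinges on the observation that the number $\tfrac{(n-1)(n-4)}{n-2}$ is exactly calibrated to push the subharmonicity constant of $f$ past the critical value $2$.

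After rescaling so that $\lambda=\tfrac12$, I would first record what Theorem~\ref{sth1} supplies: $f$ attains its global minimum $f_0$, is unbounded above, $0\le R\le n-1$, and $(f-f_0)\le|\nabla f|^2\le 2(f-f_0)$. The decisive computation is to substitute the scalar bound into the traced soliton equation \eqref{ss1}, which gives $\Delta f=\tfrac{n}{2}-\tfrac{(n-2)R}{2(n-1)}$. Since $R\le k<\tfrac{(n-1)(n-4)}{n-2}$, one has $\tfrac{(n-2)k}{2(n-1)}<\tfrac{n-4}{2}$, and therefore $\Delta f\ge \delta:=\tfrac{n}{2}-\tfrac{(n-2)k}{2(n-1)}>2$. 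This is precisely where the threshold and the assumption $n>4$ enter: they guarantee $\delta>2$ (and that the admissible range of $k$ is nonempty given $R\ge 0$), which is exactly the borderline exponent needed below.

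Next I would extract geometric control of $f$ from the gradient estimate. Setting $h:=\sqrt{f-f_0}$, the upper bound $|\nabla f|^2\le 2(f-f_0)$ gives $|\nabla h|\le \tfrac1{\sqrt2}$, so $h$ is Lipschitz from the minimum point $p$, $h(x)\le \tfrac1{\sqrt2}\,d(x,p)$, whence $|\nabla f|\le r$ on $\partial B_r(p)$; the lower bound $|\nabla f|^2\ge f-f_0$ gives $|\nabla h|\ge\tfrac12$, and following the descending gradient flow of $h$ (along which $h$ decreases at rate $\ge\tfrac12$ per unit length) shows $f$ is proper with $f\to\infty$ at infinity. With $V(r)=\mathrm{Vol}(B_r(p))$, the divergence theorem then yields $\delta V(r)\le \int_{B_r}\Delta f=\int_{\partial B_r}\partial_\nu f\le r\,V'(r)$, so $V(r)\ge C r^{\delta}$; because $\delta>2$, the integral $\int^\infty r\,dr/V(r)$ converges and $M$ is non-parabolic.

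Finally, to upgrade this to \emph{all} ends I would localize the flux argument. Fixing a compact set $K$ with smooth boundary and an end $E$ of $M\setminus K$, the same divergence identity applied to $E\cap B_r$ gives $r\,V_E'(r)\ge \delta\,V_E(r)-\Phi_E$, where $V_E(r)=\mathrm{Vol}(E\cap B_r)$ and $\Phi_E$ is the (constant) flux of $\nabla f$ across $\partial E$. Using properness of $f$ together with the coarea estimate $\mathrm{Vol}(\{t_0<f<t\}\cap E)\gtrsim \log t$, each end is seen to have infinite volume, so I may choose $r_1$ large enough that $V_E(r_1)>\Phi_E/\delta$; integrating the differential inequality from $r_1$ then preserves the full exponent and gives $V_E(r)\ge C_E\,r^{\delta}$ for large $r$, after which the end version of the volume test makes $E$ non-parabolic. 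I expect the main obstacle to be exactly this localization: the fixed boundary flux $\Phi_E$ must be absorbed without degrading the growth exponent from $\delta$ to something $\le 2$, and this is what forces me first to rule out finite-volume ends via the coarea/level-set estimate before the differential inequality can be closed.
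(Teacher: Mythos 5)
Your opening computations are sound and in fact reproduce the paper's key numerics in different clothing: with $\lambda=\tfrac12$, equation (\ref{ss1}) gives $\Delta f=\tfrac n2-\tfrac{(n-2)R}{2(n-1)}\ge\delta:=\tfrac n2-\tfrac{(n-2)k}{2(n-1)}>2$, and the paper's exponent $a=\tfrac{n-4}4-\tfrac{k(n-2)}{4(n-1)}$ is precisely $(\delta-2)/2$; the flux inequality $\delta V(r)\le rV'(r)$ and the resulting growth $V(r)\ge Cr^{\delta}$ are also fine. The fatal gap is the criterion with which you conclude: the implication ``$\int^{\infty} r\,dr/\mathrm{Vol}(B_r)<\infty$ implies non-parabolic'' is \emph{false} in general. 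What holds without curvature hypotheses (Cheng--Yau, Varopoulos, Grigor'yan, Karp; this is the statement actually found in \cite{AG1999}) is only the converse: $\int^{\infty} r\,dr/V(r)=\infty$ forces parabolicity. Super-quadratic volume growth is thus a necessary condition for non-parabolicity, never by itself a sufficient one. Concretely, a model manifold $dr^2+\psi(r)^2 g_{S^{n-1}}$ whose warping function equals $r$ except on thin necks around $r=m$, where $\psi$ dips to $e^{-m}$ over intervals of length $m^{-10}$, has volume growth of Euclidean order $r^n>r^2$, yet $\int^{\infty}\psi(r)^{1-n}dr=\infty$, so it is parabolic. The volume test upgrades to an equivalence only under additional hypotheses such as $Ric\ge0$ (Li--Yau); Theorem \ref{th1} assumes no Ricci bound (only the two corollaries after it do), and Theorem \ref{sth1} supplies merely $R\ge0$. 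The same objection applies a fortiori to your per-end refinement, whose auxiliary coarea estimate is in any case unjustified.

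The paper spends the same threshold $\delta>2$ differently, and this is the step your argument is missing: it builds a superharmonic barrier rather than volume growth. Since $f>0$ forces $f_0\ge0$, Theorem \ref{sth1} gives $|\nabla f|^2\le 2(f-f_0)\le 2f$, and hence
\begin{equation*}
\Delta f^{-a}=-af^{-a-1}\Delta f+a(a+1)f^{-a-2}|\nabla f|^2\le af^{-a-1}\bigl(-\delta+2(a+1)\bigr)=0,
\end{equation*}
because $2(a+1)=\delta$ exactly. Thus $f^{-a}$ is a positive superharmonic function which (granting, as the paper does, that $f\to\infty$ at infinity, a consequence of the lower gradient bound and properness of $f$) tends to zero at infinity; a positive superharmonic function on an end lying strictly below its infimum over the end's boundary is the classical certificate of non-parabolicity of that end \cite{AG1999}, and it applies to every end simultaneously. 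So your computation of $\delta$ identifies the right quantity, but it must be converted into this barrier, not into volume growth; note also that the positivity of $f$, which your route never uses, is genuinely needed here to discard the term $-2a(a+1)f_0f^{-a-2}$.
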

\begin{proof}
For $a=\frac{n-4}{4}-\frac{k(n-2)}{4(n-1)}>0$, using the equation $(\ref{ss1})$ and the Theorem $\ref{sth1}$ we calculate
\begin{eqnarray}
\nonumber\Delta f^{-a}&=&-af^{-a-1}\Delta f +a(a+1)f^{-a-2}|\nabla f|^2\\
\nonumber&\leq& -a\Big\{\frac{n}{2}+\frac{nR}{2(n-1)}-R\Big\}f^{-a-1}+a(a+1)\{2(f-f_0)\}f^{-a-2}\\
\nonumber&=& \Big\{-a\Big(\frac{n}{2}+\frac{nR}{2(n-1)}-R\Big)+2a(a+1)\Big\}f^{-a-1}-2a(a+1)f_0f^{-a-2}\\
\nonumber&\leq& \Big\{-a\Big(\frac{n}{2}+\frac{nR}{2(n-1)}-R\Big)+2a(a+1)\Big\}f^{-a-1}\\
\nonumber&\leq& a\Big\{\frac{k(n-2)}{2(n-1)}-\frac{n}{2}+2(a+1)\Big\}f^{-a-1}
=0.
\end{eqnarray}
Hence it follows that $f^{-a}$ is a positive superharmonic function which converges to zero at infinity. This proves that (see, \cite{AG1999}) any end of $M$ and hence $M$ is non-parabolic.
\end{proof}
The following corollaries immediately follows from Lemma $\ref{gx1}$, Lemma $\ref{gx2}$ and Theorem $\ref{th1}$:
\begin{cor}
Let $(M, g, f)$ be a complete non-compact gradient shrinking Schouten soliton of dimension $n(> 4)$ with $R\leq k<\frac{(n-1)(n-4)}{n-2}$ for some real constant $k$, $Ric\geq 0$ and the potential function $f$ is non-constant with $min_{p\in M} f(p)=f_0\geq 0$. Then the following relation holds: $$\lim\limits_{r\rightarrow\infty}\frac{\int_{l\leq r}|\nabla l|^3}{r^n}=(V_M)^{\frac{1}{n-2}}\omega_n.$$
\end{cor}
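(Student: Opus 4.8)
The plan is to recognize that the asserted limit is literally the conclusion of Lemma \ref{gx1}, so that the proof amounts to checking the three hypotheses of that lemma for the soliton at hand: completeness and non-compactness in dimension $n$, non-negativity of the Ricci curvature, and non-parabolicity of $M$. The first is part of the standing assumption on $(M,g,f)$, and $Ric\ge 0$ is imposed outright; hence the entire content of the argument collapses to verifying that $M$ is non-parabolic. (Lemma \ref{gx2} plays no role for this particular corollary; it is needed only for the companion statements.)

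To supply non-parabolicity I would invoke Theorem \ref{th1}. First I would confirm that the present hypotheses match those of that theorem: $(M,g,f)$ is a complete non-compact gradient shrinking Schouten soliton of dimension $n>4$, the scalar curvature satisfies $R\le k<\tfrac{(n-1)(n-4)}{n-2}$, and the potential $f$ is non-constant. Theorem \ref{th1} then guarantees that every end of $M$ is non-parabolic, and in particular that $M$ is non-parabolic. Feeding this, together with the assumed $Ric\ge 0$, into Lemma \ref{gx1} yields the stated asymptotic identity at once.

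The single point meriting care --- and thus the main (if modest) obstacle --- is the mismatch in the positivity assumption: Theorem \ref{th1} is phrased for a \emph{positive} potential, whereas here one is only told that $f$ is non-constant with $f_0=\min_M f\ge 0$. Since the soliton is shrinking, Theorem \ref{sth1} ensures that $f$ genuinely attains its global minimum $f_0\ge 0$ and is unbounded above, so $f\ge 0$ and $f\to\infty$ at infinity. When $f_0>0$ this is outright positivity and Theorem \ref{th1} applies verbatim. When $f_0=0$ the function $f^{-a}$ used in Theorem \ref{th1} is singular on the minimum set, but this set is compact; since the conclusion concerns the \emph{ends} of $M$, one simply restricts to the complement of a large ball containing it, where $f>0$, and there $f^{-a}$ is a bona fide positive superharmonic function tending to zero at infinity. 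Thus non-parabolicity of every end survives the borderline case, and the chain of implications closes.
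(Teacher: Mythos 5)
Your proposal is correct and follows exactly the paper's route: the paper simply observes that the corollary ``immediately follows'' from Theorem \ref{th1} (which supplies non-parabolicity) combined with Lemma \ref{gx1}, precisely the chain of implications you describe. Your additional care over the hypothesis mismatch --- Theorem \ref{th1} assumes $f$ \emph{positive} while the corollary only assumes $f_0=\min_M f\ge 0$, handled via Theorem \ref{sth1} and restriction away from the minimum set when $f_0=0$ --- is a point the paper silently glosses over, so your treatment is if anything more complete than the original.
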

\begin{cor}
Let $(M, g, f)$ be a complete non-compact gradient shrinking Schouten soliton of dimension $n(> 4)$ with not maximal volume growth, $R\leq k<\frac{(n-1)(n-4)}{n-2}$ for some real constant $k$, $Ric\geq 0$ and the potential function $f$ is non-constant with $min_{p\in M} f(p)=f_0\geq 0$. Then $\lim\limits_{r\rightarrow \infty}\sup_{t(x)=r}|\nabla l|(x)=0$. Furthermore, if it has maximal volume growth, then $|\nabla l|\leq 1$.
\end{cor}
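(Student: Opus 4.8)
The plan is to obtain the conclusion by chaining Theorem \ref{th1} with Lemma \ref{gx2}, since the hypotheses of the corollary are tailored to feed directly into both. First I would verify that the standing assumptions supply every input that Theorem \ref{th1} demands: the manifold is a complete non-compact gradient shrinking Schouten soliton of dimension $n>4$, the scalar curvature obeys $R\leq k<\frac{(n-1)(n-4)}{n-2}$, and the potential $f$ is non-constant with $\min_{M}f=f_0\geq 0$, so that $f\geq f_0\geq 0$ and the term $-2a(a+1)f_0f^{-a-2}$ occurring in the superharmonicity estimate of Theorem \ref{th1} is non-positive. Consequently Theorem \ref{th1} applies and yields that every end of $M$, and hence $M$ itself, is non-parabolic.

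Non-parabolicity is exactly what is needed to make the auxiliary function $l$ meaningful. Once $M$ is non-parabolic there is a unique minimal positive Green function $G(p,\cdot)$, so that $l(x)=[n(n-2)\omega_nG(p,x)]^{1/(2-n)}$ is well defined on $M\setminus\{p\}$. At this stage I would simply record that the two remaining structural hypotheses of Lemma \ref{gx2}, namely completeness and $Ric\geq 0$, are explicitly part of the corollary's assumptions, and that the dimensional condition $n\geq 3$ of the lemma holds since $n>4$.

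With $l$ defined and $Ric\geq 0$ in force, the conclusion is immediate from Lemma \ref{gx2}. In the not-maximal volume growth case the lemma gives $\lim_{r\to\infty}\sup_{t(x)=r}|\nabla l|(x)=0$, which is the first assertion; in the maximal volume growth case it gives $|\nabla l|\leq 1$, which is the furthermore clause. Thus both statements follow at once as soon as non-parabolicity has been secured, and Lemma \ref{gx1} is not required here (it belongs to the preceding corollary).

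The only genuinely non-routine point is the first step: confirming that the weaker hypothesis $f_0\geq 0$, rather than strict positivity, still lets Theorem \ref{th1} run. I expect this to be the main thing to check, because the comparison function $f^{-a}$ must remain well defined and must decay at infinity. One verifies that $f_0\geq 0$ is precisely the sign condition permitting the offending $f_0$-term to be discarded in the computation of $\Delta f^{-a}$, while Theorem \ref{sth1} guarantees $f$ is unbounded above so that $f^{-a}\to 0$ at infinity. Everything after this verification is a direct invocation of Lemma \ref{gx2}.
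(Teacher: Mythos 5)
Your proposal is correct and follows essentially the same route as the paper, which states that this corollary ``immediately follows'' from Theorem \ref{th1} combined with Lemma \ref{gx2}: non-parabolicity makes $l$ well defined, and the lemma then gives both the not-maximal and maximal volume growth conclusions. Your extra care about the hypothesis $f_0\geq 0$ versus the strict positivity of $f$ in Theorem \ref{th1} is a worthwhile check the paper glosses over (note that if $f_0=0$ the function $f^{-a}$ degenerates at the minimum point, so strict positivity of $f$ is what actually keeps the argument running).
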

\begin{thm}
Let $(M,g,f)$ be a complete non-compact gradient expanding Schouten soliton with $-(n-1)<k_1\leq R$ for some real constant $k_1$ and the potential function $f$ is positive non-constant with $f^{-b}$ bounded above. Then all the ends of $M$ are non-parabolic.
\end{thm}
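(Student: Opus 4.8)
The plan is to rerun the argument of Theorem \ref{th1} in the expanding regime. After rescaling we may take $\lambda=-\tfrac12$ and invoke the expanding branch of Theorem \ref{sth1}: the potential $f$ attains a global maximum $f_0=\max_M f$, is unbounded below, obeys the curvature bound $2(n-1)\lambda\le R$ (which for $\lambda=-\tfrac12$ reads $R\ge-(n-1)$, matching the hypothesis $-(n-1)<k_1\le R$), and satisfies the Hamilton-type gradient estimate
\begin{equation*}
2\lambda(f-f_0)\le|\nabla f|^2\le 4\lambda(f-f_0).
\end{equation*}
As in Theorem \ref{th1} the test object is a power $f^{-b}$ of the potential; since $f>0$ and $f\le f_0$, this is the natural candidate, and the hypothesis that $f^{-b}$ is bounded above is exactly what will let us invoke the parabolicity dichotomy recorded at the start of this section.

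First I would write out the same expansion as in Theorem \ref{th1},
\begin{equation*}
\Delta f^{-b}=-b\,f^{-b-1}\Delta f+b(b+1)\,f^{-b-2}|\nabla f|^2,
\end{equation*}
and substitute $\Delta f$ from the trace identity (\ref{ss1}). With $\lambda=-\tfrac12$ this gives $\Delta f=-\tfrac{n}{2}+\tfrac{nR}{2(n-1)}-R=-\bigl(\tfrac{n}{2}+\tfrac{(n-2)R}{2(n-1)}\bigr)$. The role of the hypothesis $R>-(n-1)$ is now transparent: it forces the bracket to exceed $1$, so that $\Delta f<0$ everywhere on $M$. This is the expanding counterpart of the curvature control that annihilates the dangerous term in Theorem \ref{th1}. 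Because the sign of $\Delta f$ is already favourable, for the relevant range of $b$ the two summands above carry a common sign and combine to give $\Delta f^{-b}\le 0$; in particular, and in contrast with Theorem \ref{th1}, no upper estimate on $|\nabla f|^2$ is needed to sign $\Delta f^{-b}$. Thus $f^{-b}$ is a non-constant, positive, superharmonic function which by hypothesis is bounded above, and the criterion stated at the beginning of this section — a complete manifold carrying a non-constant positive superharmonic function (equivalently, a non-constant subharmonic function of finite supremum) is non-parabolic — yields that $M$ is non-parabolic.

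The last step is to upgrade non-parabolicity of $M$ to non-parabolicity of every end, exactly as in the final line of Theorem \ref{th1}. Here the two-sided gradient estimate of Theorem \ref{sth1} re-enters: it controls the decay of $f$, and hence of $f^{-b}$, along each end, so that $f^{-b}$ plays end by end the role that $f^{-a}$ played in Theorem \ref{th1}, and the result of \cite{AG1999} applies to each end separately. I expect the genuine obstacle to lie precisely in this per-end passage rather than in the sign bookkeeping, which is routine once $\Delta f<0$ is established. Concretely, one must check that the behaviour of $f^{-b}$ at infinity is good enough — this is where both the gradient estimate of Theorem \ref{sth1} and the boundedness hypothesis on $f^{-b}$ are indispensable — so that \cite{AG1999} can be invoked on each end and not merely on $M$ as a whole.
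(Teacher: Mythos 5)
Your framework is the paper's: the same test function $f^{-b}$, the same substitution of $\Delta f$ from the trace identity (\ref{ss1}), and the same parabolicity criterion from \cite{AG1999}. Your computation of $\Delta f$ is also right: for $\lambda=-\tfrac12$ the trace identity gives $\Delta f=-\bigl(\tfrac{n}{2}+\tfrac{(n-2)R}{2(n-1)}\bigr)$, and $R\geq k_1>-(n-1)$ forces $\Delta f<-1<0$. But the sign you then assign to $\Delta f^{-b}$ is backwards. In
\begin{equation*}
\Delta f^{-b}=-b\,f^{-b-1}\Delta f+b(b+1)\,f^{-b-2}|\nabla f|^2,
\end{equation*}
with $b>0$, $f>0$ and $\Delta f<0$, the first summand $-b\,f^{-b-1}\Delta f$ is strictly \emph{positive}, and the second is non-negative; the ``common sign'' of the two terms is therefore positive, so $\Delta f^{-b}>0$ everywhere. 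Hence $f^{-b}$ is \emph{subharmonic}, not superharmonic as you claim; your assertion $\Delta f^{-b}\le 0$ is false at every point of $M$.

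The error is self-signalling: if $f^{-b}$ really were a non-constant positive superharmonic function, the hypothesis that $f^{-b}$ is bounded above would be superfluous, since positivity alone would feed the superharmonic branch of the criterion. That hypothesis is in the statement precisely because the correct sign is the opposite one. The paper fixes $b=\frac{n-2}{2}+\frac{k_1(n-2)}{2(n-1)}$ (which is positive exactly when $k_1>-(n-1)$), shows $\Delta f^{-b}\geq 0$ using (\ref{ss1}), $R\geq k_1$ and the lower gradient bound of Theorem \ref{sth1}, and concludes that $f^{-b}$ is a non-constant subharmonic function bounded above, whence $M$ and its ends are non-parabolic by \cite{AG1999}. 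Your proof is rescued by flipping ``superharmonic'' to ``subharmonic'' and invoking the subharmonic form of the criterion --- the form your own parenthetical remark records --- at which point it essentially coincides with the paper's argument; you are even right that no gradient estimate is needed to get the sign, since the gradient term can simply be discarded as non-negative. (Your deferral of the per-end passage is not a real divergence: the paper treats it just as briefly, citing \cite{AG1999}.) What cannot stand is the proof as written, because its central analytic claim has the wrong sign.
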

\begin{proof}
For $b=\frac{n-2}{2}+\frac{k_1(n-2)}{2(n-1)}>0$, using the equation $(\ref{ss1})$ and the Theorem $\ref{sth1}$ we calculate
\begin{eqnarray}
\nonumber\Delta f^{-b}&=&-bf^{-b-1}\Delta f +b(b+1)f^{-b-2}|\nabla f|^2\\
\nonumber&\geq& -b\Big\{-\frac{n}{2}+\frac{nR}{2(n-1)}-R\Big\}f^{-b-1}+b(b+1)\{(f_0-f)\}f^{-b-2}\\
\nonumber&=& \Big\{-b\Big(-\frac{n}{2}-\frac{(n-2)R}{2(n-1)}\Big)-b(b+1)\Big\}f^{-b-1}+b(b+1)f_0f^{-b-2}\\
\nonumber&\geq&\Big\{-b\Big(-\frac{n}{2}-\frac{(n-2)R}{2(n-1)}\Big)-b(b+1)\Big\}f^{-b-1} \\
\nonumber&\geq& b\Big\{\frac{k_1(n-2)}{2(n-1)}+\frac{n}{2}-(b+1)\Big\}f^{-b-1}
=0.
\end{eqnarray}
Hence it follows that $f^{-b}$ is a subharmonic function which is bounded above. This proves that (see, \cite{AG1999}) any end of $M$ and hence $M$ is non-parabolic.
\end{proof}

\section{acknowledgment}
 The second author gratefully acknowledges to the
 CSIR(File No.:09/025(0282)/2019-EMR-I), Govt. of India for financial assistance.

\end{document}